\definecolor{alert}{rgb}{0.8,0,0}
\newcommand{\alert}[1]{\textbf{\textcolor{alert}{#1}}}
\newcommand{\alertm}[1]{%
	\marginpar{%
		\ifodd\value{page} \raggedright \else \raggedleft \fi
		\footnotesize{\alert{#1}}
	}
}
\newcommand{\N}{\mathbb{N}}
\newcommand{\R}{\mathbb{R}}
\newcommand{\s}{\mathbb{S}}
\newcommand{\h}{\mathbb{H}}
\newcommand{\E}{\mathbb{E}}
\newcommand{\M}{\mathbb{M}}
\newcommand{\Nil}{\mathrm{Nil}}
\newcommand{\df}{\,\mathrm{d}}
\newcommand{\sm}{\smallsetminus}
\newcommand{\p}{\partial}
\newtheorem{theorem}{Theorem}
\newtheorem{proposition}[theorem]{Proposition}
\newtheorem{corollary}[theorem]{Corollary}
\newtheorem{lemma}[theorem]{Lemma}
\theoremstyle{definition}
  \newtheorem{definition}[theorem]{Definition}
\theoremstyle{remark}
\newtheorem{remark}[theorem]{Remark}
\numberwithin{equation}{section}
\title{On complete constant mean curvature vertical multigraphs in $\E(\kappa,\tau)$}
\author{Jos\'e M. Manzano}
\author{M. Magdalena Rodr\'\i guez}
\address{Departamento de Geometr\'{\i}a  y Topolog\'{\i}a \\
Universidad de Granada \\
18071 Granada, Spain} 
\email{jmmanzano@ugr.es, magdarp@ugr.es. }
\thanks{Research partially supported by the MCyT-Feder research
  project MTM2011-22547, the Regional J. Andaluc\'\i a Grant
  no. P09-FQM-5088 and the CEI BioTIC GENIL project (CEB09-0010)
  no. PYR-2010-21}
\subjclass[2000]{Primary 53A10; Secondary 49Q05, 53C42, 53C30} 
\keywords{multigraphs, constant mean curvature, homogeneous spaces}
\begin{document}

\begin{abstract}
  We prove that any complete surface with constant mean curvature in a
  homogeneous space $\E(\kappa,\tau)$ which is transversal to the
  vertical Killing vector field is, in fact, a vertical graph.  As a
  consequence we get that any orientable, parabolic, complete,
  immersed surface with constant mean curvature $H$ in
  $\E(\kappa,\tau)$ (different from a horizontal slice in
  $\s^2\times\R$) is either a vertical cylinder or a vertical graph
  (in both cases, it must be $4H^2+\kappa\leq0$).
\end{abstract}

\maketitle

\section{Introduction}

The classical theory of constant mean curvature surfaces in
$3$-manifolds is still an active field of research nowadays. Among all
ambient $3$-manifolds, the most studied subclass consists of those
simply-connected with constant sectional curvature, the so-called
\emph{space forms} ($\R^3$, $\s^3$ or $\h^3$). Their isometry group
has dimension $6$ and acts transitively on their tangent bundle.
Apart from the space forms, the most symmetric ones are those
simply-connected whose isometry group has dimension $4$. They are
usually denoted by $\E(\kappa,\tau)$, where $\kappa,\tau\in\R$ satisfy
$\kappa-4\tau^2\neq0$, and include the Riemannian product manifolds
$\s^2(\kappa)\times\R$ and $\h^2(\kappa)\times\R$ (when $\tau=0$), the
Heisenberg space Nil$_3$ (for $\kappa=0$), the universal cover
$\widetilde{\mbox{SL}_2}(\R)$ of the special linear group SL$_2(\R)$
endowed with some special left-invariant metrics (for $\kappa<0$ and
$\tau \neq 0$), and the Berger spheres (for $\kappa>0$ and $\tau \neq
0$). See~\cite{Daniel07} for more details.

The spaces $\E(\kappa,\tau)$ are characterized by admitting a
Riemannian submersion $\pi:\E(\kappa,\tau)\to\M^2(\kappa)$ with
constant bundle curvature $\tau$, where $\M^2(\kappa)$ stands for the
simply-connected Riemannian surface with constant curvature $\kappa$,
such that the fibers of $\pi$ are the integral curves of a unit
Killing vector field in $\E(\kappa,\tau)$ (see~\cite{man12}). In what
follows, we will refer to this field as the \emph{vertical} Killing
vector field and it will be denoted by $\xi$.

In the theory of constant mean curvature surfaces ($H$-surfaces in the
sequel) in $\E(\kappa,\tau)$, vertical multigraphs play an
important role. A surface~$\Sigma$ immersed in $\E(\kappa,\tau)$ is
said to be a \emph{vertical multigraph} if the following equivalent
conditions hold:
\begin{enumerate}
\item[a)] $\Sigma$ is transversal to the vertical Killing vector field
  $\xi$.
\item[b)] The angle function $\nu=\langle N,\xi\rangle$ has no zeros in
  $\Sigma$, where $N$ is a global unit normal vector field of
  $\Sigma$.
\item[c)] The projection $\pi_{|\Sigma}:\Sigma\to\M^2(\kappa)$ is a local
  diffeomorphism.
\end{enumerate}
The first two conditions are trivially equivalent, whereas the third
one follows from the fact that the absolute value of the Jacobian of
$\pi_{|\Sigma}$ equals~$|\nu|$.

The need of knowing whether a vertical $H$-multigraph is or not
embedded often arises in the study of such surfaces. In this paper we
solve this problem by proving that they are always \emph{vertical
  graphs} (i.e., they intersect at most once each integral curve of
$\xi$), and in particular embedded, when we additionally assume that
the vertical $H$-multigraph is complete. (We observe that the
completeness asumption is needed: A counterexample is given, for
example, by the helicoid of $\h^2\times\R$ constructed by Nelli and
Rosenberg in~\cite{NR02} minus a neighborhood of its axis.) In this line, we recall the following known results:
\begin{itemize}
\item Hauswirth, Rosenberg and Spruck proved a half-space theorem for
  properly embedded $\frac{1}{2}$-surfaces in $\h^2\times\R$ and
  concluded that a complete vertical $\frac{1}{2}$-multigraph in
  $\h^2\times\R$ is always an entire vertical graph
  (see~\cite{HRS}), i.e., it intersects exactly once each integral curve
  of~$\xi$. Later on, by using another halfspace theorem for properly
  immersed surfaces in $\Nil_3$, Daniel and Hauswirth~\cite{dh1} proved the same result for minimal vertical multigraphs
  in~$\Nil_3$. Finally, using this theorem by Daniel and Hauswirth,
  the classification theorem by Fern\'andez and Mira in~\cite{fm1} and
  the Daniel correspondence~\cite{Daniel07}, it is possible to extend
  the aforementioned results to prove that a complete vertical
  $H$-multigraph in $\E(\kappa,\tau)$ satisfying $4H^2+\kappa=0$ is an
  entire vertical graph (see also Corollary~4.6.3 in~\cite{kias} for a
  complete reference).
\item Espinar and Rosenberg proved in \cite{ER09} that there are no
  complete vertical H-multigraphs in $\h^2\times\R$ for
  $H>\frac{1}{2}$.  In a joint work with Joaqu\'\i n P\'erez (see the
  proof of Theorem 2 in~\cite{MaPR}), the authors proved using a
  different approach that the only complete vertical $H$-multigraphs
  in $\E(\kappa,\tau)$ with $4H^2+\kappa>0$ are the horizontal slices
  $\s^2(\kappa)\times\{t_0\}$ in $\s^2(\kappa)\times\R$, for any
  $\kappa>0$ (and $\tau=H=0$). The latter result has also been proved in~\cite{P}.
\end{itemize}

To obtain the general result stated below, it only remains to
study the case $4H^2+\kappa<0$. We highlight that the geometry of an $H$-surface in the homogeneous
spaces $\E(\kappa,\tau)$ varies essentially depending on the sign of
$4H^2+\kappa$. For instance, it is known that constant mean curvature
spheres exist if, and only if, $4H^2+\kappa>0$; see Theorem 2.5.3
in~\cite{kias}.

\begin{theorem}\label{teor:grafos-multigrafos}
  Let $\Sigma$ be a complete vertical $H$-multigraph in
  $\E(\kappa,\tau)$. Then, one of the following statements hold:
  \begin{enumerate}
  \item[a)] $\E(\kappa,\tau)=\s^2(\kappa)\times\R$, $H=0$ and $\Sigma=\s^2(\kappa)\times\{t_0\}$, for some $t_0\in\R$.
  \item[b)] $4H^2+\kappa\leq 0$ and $\Sigma$ is a vertical
    graph. Moreover, if the equality holds then the graph is entire.
  \end{enumerate}
\end{theorem}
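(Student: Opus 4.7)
The plan is to treat separately the three regimes $4H^2+\kappa>0$, $4H^2+\kappa=0$, and $4H^2+\kappa<0$. The first is already covered by \cite{MaPR,P}, which force $\E(\kappa,\tau)=\s^2(\kappa)\times\R$ with $H=0$ and $\Sigma$ a horizontal slice, giving case~(a). The boundary case $4H^2+\kappa=0$ is also covered, since combining the Daniel--Hauswirth halfspace theorem \cite{dh1} with the Fern\'andez--Mira classification \cite{fm1} and the Daniel correspondence \cite{Daniel07} already yields that $\Sigma$ is an entire vertical graph. The substantive task is therefore the strict regime $4H^2+\kappa<0$, where one only needs to promote ``multigraph'' to ``graph''.

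In this regime I would argue by contradiction. If $\Sigma$ is not a vertical graph, there are distinct points $p_1,p_2\in\Sigma$ with $\pi(p_1)=\pi(p_2)$, so $p_2=\phi_{t_0}(p_1)$ for some $t_0>0$, where $\{\phi_t\}$ denotes the flow of $\xi$. Near $p_1$ and $p_2$ the surface is the graph of two smooth $H$-graph functions $u_1<u_2$ on a neighborhood of $x_0=\pi(p_1)$ in $\M^2(\kappa)$. I would extend both graphs as far as possible by analytic continuation along arcs of $\Sigma$, producing a maximal connected domain $\Omega\subset\M^2(\kappa)$ on which $u_1$ and $u_2$ are defined, their graphs are contained in $\Sigma$, and $u_1<u_2$. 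The strong maximum principle applied to the linear elliptic operator satisfied by $w=u_2-u_1$ (obtained by writing the CMC equation for $u_1$ and $u_2$ in divergence form and subtracting) forbids any interior tangency of the two sheets, so $w>0$ throughout $\Omega$.

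The contradiction should then come from the boundary behavior on $\p\Omega$. The only obstruction to extending the graphical representation is that the angle function $\nu=\langle N,\xi\rangle$ of $\Sigma$ may tend to zero along sequences in $\Sigma$ projecting to $\p\Omega$. To rule this out I would exploit two structural facts: since $\xi$ is Killing and $H$ is constant, $\nu$ is a Jacobi function on $\Sigma$, and the multigraph hypothesis $\nu\neq 0$ forces it to have constant sign, so $\Sigma$ is a complete stable $H$-surface; and in the regime $4H^2+\kappa\leq 0$, stable $H$-surfaces in $\E(\kappa,\tau)$ admit uniform curvature estimates, giving a bound on the second fundamental form. Combined with completeness, this should prevent $\nu$ from degenerating on the interior and force $\Omega=\M^2(\kappa)$, producing two distinct entire $H$-graphs in $\Sigma$, which can then be excluded by a further uniqueness argument in this regime.

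The main obstacle is precisely this last step: converting multigraph-induced stability and the sign condition $4H^2+\kappa<0$ into a uniform positive lower bound for $|\nu|$ on $\Sigma$. A naive Alexandrov tangency argument between $\Sigma$ and $\phi_{t_0}(\Sigma)$ does not work, because $\Sigma$ is only assumed complete rather than properly immersed, and because the tangent planes of the two sheets at their coincidence point $p_2$ can be transverse, so the CMC tangency principle does not apply directly. It is precisely the interplay of the multigraph-induced stability, the sign condition $4H^2+\kappa<0$, and the completeness of the induced metric that must conspire to give the required uniform control on $\nu$.
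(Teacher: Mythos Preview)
Your decomposition into the three regimes is correct and matches the paper: the cases $4H^2+\kappa>0$ and $4H^2+\kappa=0$ are disposed of exactly as you indicate, by citing \cite{MaPR,P} and the combination of \cite{dh1,fm1,Daniel07}. The new content is the strict regime $4H^2+\kappa<0$, and here your proposal has a genuine gap.

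The mechanism you hope for, a uniform positive lower bound on $|\nu|$ coming from stability and curvature estimates, is in fact false. The paper itself lists (in the discussion after the statement of the theorem) complete vertical $H$-graphs in $\h^2\times\R$ defined over proper subdomains bounded by curves of geodesic curvature $\pm 2H$, with boundary values $\pm\infty$ along those curves (e.g.\ \cite{SE,CR10,MRR11,FoM10}). Along any sequence in such a graph projecting towards the boundary curve, the surface becomes asymptotically vertical, so $\nu\to 0$. These are complete stable $H$-multigraphs with $4H^2+\kappa<0$ and no uniform lower bound on $|\nu|$. Consequently your extension argument cannot force $\Omega=\M^2(\kappa)$, and the two-sheet comparison does not close.

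The paper's argument goes in a different direction and \emph{embraces} the degeneration of $\nu$. The key external input is Lemma~\ref{lema:asintotico2} (from \cite{HRS}): at any boundary point $\hat x$ where a local $H$-graph representation cannot be extended, the function has infinite boundary values along a complete curve $\Gamma$ of geodesic curvature $\pm 2H$ tangent to the boundary at $\hat x$, and the graph \emph{does} extend on the near side of $\Gamma$. Since $4H^2+\kappa<0$, these curves are non-compact. The paper then shows (Lemmas~\ref{lem1}--\ref{lem3}) that the collection of such curves is finite at each stage, pairwise disjoint, and in ``general position'', so one can iteratively enlarge the admissible base domain $\Omega_{\mathcal C_n}$ to contain balls $B(x_0,R_n)$ with $R_n\to\infty$. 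The crucial metric observation is that, because the associated function blows up along each $\Gamma$, the boundary of the graphical piece $F_u(\Omega_{\mathcal C_n})\subset\Sigma$ projects only onto $\partial B(x_0,R_n)$, so $B_\Sigma(p,R_n)\subset F_u(\Omega_{\mathcal C_n})$; any two points of $\Sigma$ eventually lie in a single graphical piece, and the graph property follows. What you are missing is precisely this HRS-type asymptotic lemma and the ensuing combinatorial control on the blow-up curves.
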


We remark that the condition $4H^2+\kappa=0$ in
Theorem~\ref{teor:grafos-multigrafos} is not necessary in order to
obtain entire vertical $H$-graphs: Besides horizontal slices, other
entire minimal vertical graphs in $\h^2\times\R$ have been constructed
by Nelli and Rosenberg in~\cite{NR02}, by Collin and Rosenberg in~\cite{CR10} and by Mazet, Rosenberg and the second author
in~\cite{MRR11}. The reader can also find some examples of
rotationally invariant entire vertical $H$-graphs in $\h^2\times\R$
for any $0<H\leq \frac12$ in~\cite{NR06}.
We also emphasize that there exist many complete vertical $H$-graphs
in $\h^2\times\R$ which are not entire:
\begin{itemize}
\item On the one hand, Sa Earp~\cite{SE} and Abresch gave a explicit
  complete minimal vertical graph defined on half a hyperbolic plane.
  In~\cite{CR10,MRR11,Rod}, some complete minimal examples in $\h^2\times\R$
  (which are vertical graphs over simply connected domains boun\-ded by
  --possibly infinitely many-- complete geodesics where the graph has
  non-bounded boundary data, and/or finitely many arcs at the infinite
  boundary of $\h^2$) are given. Melo constructed in~\cite{Melo}
  minimal examples in $\widetilde{\mbox{SL}_2}(\R)$ similar to those
  in~\cite{CR10}.
\item On the other hand, Folha and Melo obtained
  in~\cite{FoM10} vertical $H$-graphs in $\h^2\times\R$, with $0<H<\frac12$,
  over simply-connected domains bounded by an even number of curves of
  geodesic curvature $\pm 2H$ (disposed alternately) over which the
  graphs go to $\pm\infty$.
\end{itemize}

Let us now explain a consequence of
Theorem~\ref{teor:grafos-multigrafos}.  We consider the {\it stability
  operator} of a two-sided $H$-surface immersed in $\E(\kappa,\tau)$,
given by
\begin{equation}\label{eq:stab}
  L=\Delta+|A|^2+\mbox{Ric}(N),
\end{equation}
where $\Delta$ stands for the Laplacian with respect to the induced
metric on $\Sigma$, and $A$ and $N$ denote respectively the shape
operator and a unit normal vector field of $\Sigma$.  The surface
$\Sigma$ is said to be {\it stable} when $-L$ is a non-negative
operator (see~\cite{mpr19} for more details on stable $H$-surfaces).
Rosenberg proved in~\cite{R06} the non-existence of stable
$H$-surfaces in $\E(\kappa,\tau)$ provided that $3H^2+\kappa>\tau^2$,
other than $\s^2(\kappa)\times\{t_0\}$ in $\s^2(\kappa)\times\R$.  It
is conjectured that the optimal condition for such non-existence
result is $4H^2+\kappa>0$.  In a joint work with P\'erez~\cite{MaPR},
the authors slightly improve Rosenberg's bound in the general case and
obtain the expected bound under the additional assumption of
parabolicity.
We recall that a Riemannnian manifold $\Sigma$ is said to be {\it
  parabolic} when the only positive superharmonic functions defined on
$M$ are the constant functions.
As a direct consequence of Theorem~\ref{teor:grafos-multigrafos} above
and Theorem 2 in~\cite{MaPR}, we get the following nice classification
result:

\begin{corollary}\label{cor:stable}
  Let $\Sigma$ be an orientable, parabolic, complete, stable
  $H$-surface in $\E(\kappa,\tau)$.  Then, one of the following
  statements hold:
  \begin{enumerate}
  \item[a)] $\E(\kappa,\tau)=\s^2(\kappa)\times\R$, $H=0$ and $\Sigma=\s^2(\kappa)\times\{t_0\}$, for some $t_0\in\R$.
  \item[b)] $4H^2+\kappa\leq 0$ and $\Sigma$ is either a
    vertical graph or a vertical cylinder over a complete curve of
    geodesic curvature $2H$ in $\M^2(\kappa)$.
  \end{enumerate}
\end{corollary}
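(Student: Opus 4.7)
The plan is to combine Theorem~\ref{teor:grafos-multigrafos} of the present paper with Theorem~2 of~\cite{MaPR}, two results that are essentially complementary: the work in~\cite{MaPR} uses the parabolicity and stability hypotheses to pin down a sign on the angle function $\nu=\langle N,\xi\rangle$, while Theorem~\ref{teor:grafos-multigrafos} promotes the non-vanishing of $\nu$ to embeddedness as a graph.

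First, I would invoke Theorem~2 of~\cite{MaPR}. Recall that the angle function of any two-sided $H$-surface is a Jacobi function, $L\nu=0$, because it is associated with the one-parameter group of vertical translations generated by the Killing field~$\xi$. Under parabolicity and stability, standard arguments (viewing $\nu$ as a solution of the stability equation and exploiting that parabolic surfaces admit no nontrivial bounded positive superharmonic perturbations) force $\nu$ to have constant sign on~$\Sigma$; unique continuation then splits the picture into two sub-cases. Either $\nu\equiv 0$ on~$\Sigma$, in which case $\xi$ is tangent to~$\Sigma$ and a short calculation shows that $\Sigma$ must be a vertical cylinder whose base curve is complete and has constant geodesic curvature~$2H$ in~$\M^2(\kappa)$; or $\nu$ never vanishes on~$\Sigma$. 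The analysis in~\cite{MaPR} also shows that, unless we are in the exceptional case $\E(\kappa,\tau)=\s^2(\kappa)\times\R$ with $H=0$ and $\Sigma$ a horizontal slice (case~(a)), the sharp bound $4H^2+\kappa\le 0$ must hold.

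Second, in the remaining sub-case where $\nu$ is nowhere zero, $\Sigma$ is by definition a complete vertical $H$-multigraph, so Theorem~\ref{teor:grafos-multigrafos} of this paper applies directly and yields that $\Sigma$ is a vertical graph, which closes case~(b).

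The main obstacle is not in assembling the corollary---its proof reduces to quoting the two theorems and the short case distinction above---but in the two ingredients themselves. On the side of~\cite{MaPR}, the nontrivial step is the parabolic improvement of Rosenberg's stability bound that upgrades $3H^2+\kappa>\tau^2$ to the expected $4H^2+\kappa\leq 0$. On the side of the present paper, the genuinely new case to treat is $4H^2+\kappa<0$ in Theorem~\ref{teor:grafos-multigrafos}, which is precisely the content that this paper is devoted to establishing; once that is in hand, Corollary~\ref{cor:stable} follows as an essentially formal consequence.
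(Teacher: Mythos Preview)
Your proposal is correct and matches the paper's approach exactly: the paper does not give a separate proof of the corollary but simply states it as ``a direct consequence of Theorem~\ref{teor:grafos-multigrafos} above and Theorem~2 in~\cite{MaPR}'', which is precisely the combination you spell out. Your added explanation of how the angle function dichotomy arises from~\cite{MaPR} and how Theorem~\ref{teor:grafos-multigrafos} then upgrades the multigraph case to a graph is an accurate unpacking of that one-line justification.
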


\section{Preliminaries}

From now on, $\Sigma$ will denote a complete vertical $H$-multigraph
in $\E(\kappa,\tau)$, with $4H^2+\kappa<0$. Let us remark that the
latter condition implies $\kappa<0$, so $\E(\kappa,\tau)$ admits a
fibration over $\h^2(\kappa)$. By applying a convenient homothety in
the metric, there is no loss of generality in supposing that
$\kappa=-1$. Hence, we can consider the disk $\mathbb
D(2)=\{(x,y)\in\R^2:x^2+y^2<4\}$ and the model $\E(-1,\tau)=\mathbb
D(2)\times\R$, endowed with the Riemannian metric
\[
\df s^2=\lambda^2(\df x^2+\df y^2)+(\df z+\tau\lambda (y\df x-x\df
y))^2,
\] 
where $\lambda:\mathbb D(2)\to\R$ is given by
$\lambda(x,y)=\bigl(1-\tfrac{1}{4}(x^2+y^2)\bigr)^{-1}$. In this
model, the Riemannian fibration is nothing but
$\pi:\E(-1,\tau)\to\h^2$, $\pi(x,y,z)=(x,y)$, when we identify
$\h^2\equiv(\mathbb D(2),\lambda^2(\df x^2+\df y)^2)$.

Given an open set $\Omega\subset\h^2$ and a function $u\in
C^\infty(\Omega)$, the graph associated to $u$ is defined as the
surface parametrized by
\[
F_u:\Omega\to\E(-1,\tau),\quad F_u(x,y)=(x,y,u(x,y)).
\]

Let us also fix the following notation, for any $R>0$: 
\begin{itemize}
\item Given $x_0\in\h^2$, we denote by $B(x_0,R)$ the ball in $\h^2$
  of center $x_0$ and radius $R$.
\item Given $p_0\in\Sigma$, we denote by $B_\Sigma(p_0,R)$ the
  intrinsic ball in $\Sigma$ of center $p_0$ and radius $R$.
\end{itemize}

The proof of Theorem~\ref{teor:grafos-multigrafos} relies on some
technical results given originally by Hauswirth, Rosenberg and Spruck
in~\cite{HRS}. Although they treat the case $H=\frac{1}{2}$ in
$\h^2\times\R$, their arguments can be directly generalized to
$H$-surfaces in $\E(\kappa,\tau)$ with $4H^2+\kappa\leq 0$, giving
rise to Lemma~\ref{lema:asintotico2}
below.

Take $x_0\in\h^2$ and $R>0$ such that there exists $u\in
C^\infty(B(x_0,R))$ with $F_u(B(x_0,R))\subset\Sigma$.
(As $\pi_{|\Sigma}$ is a local diffeomorphism, this can be done for
any $x_0\in\pi(\Sigma)$.)  We also fix a unit normal vector field $N$
of $\Sigma$ so that the angle function $\nu=\langle N,\xi\rangle$ is
positive. 
We observe that $\nu$ lies in the kernel of the stability operator of
$\Sigma$, defined in~\eqref{eq:stab}, see~\cite{bce1}. Since $\nu$ has
no zeros, we deduce by a theorem given by Fischer-Colbrie~\cite{fi1}
(see Lemma 2.1 in~\cite{mpr19}) that $\Sigma$ stable. This is an
important fact in the proof of Lemma~\ref{lema:asintotico2}.

Suppose that $\hat x\in\partial B(x_0,R)$ satisfies that $u$ cannot be
extended to any neighborhood of $\hat x$ in $\h^2$ as a vertical
$H$-graph. Given a sequence $\{x_n\}\subset B(x_0,R)$ converging to
$\hat x$ and calling $p_n=F_u(x_n)$, the arguments in~\cite{HRS} can
be extended to show that the sequence of surfaces $\{\Sigma_n\}$
(where $\Sigma_n$ results from translating $\Sigma$ vertically so that
$p_n$ is at height zero), converges to a vertical cylinder
$\pi^{-1}(\Gamma)$, for a curve $\Gamma\subset\h^2$ of constant
geodesic curvature $2H$ or $-2H$ which is tangent to $\partial
B(x_0,R)$ at $\hat x$. Note that the condition $4H^2-1<0$ implies that
$\Gamma$ is non-compact.

Given $\delta>0$, we denote by $N_\delta(\Gamma)$ the open
neighborhood of $\Gamma$ in $\h^2$ of radius $\delta$.
Moreover, given $x\in\h^2\sm\Gamma$, we call
$N_\delta(\Gamma,x)=N_\delta(\Gamma)\cap U$, where $U$ is the
connected component of $\h^2\sm\Gamma$ containing $x$. By coherence,
we denote $U=N_\infty(\Gamma,x)$. 

\begin{lemma}[\cite{HRS}]\label{lema:asintotico2}
  In the setting above, suppose that $\hat x\in\partial B(x_0,R)$
  satisfies that $u$ cannot be extended to any neighborhood of $\hat
  x$ in $\h^2$ as a vertical $H$-graph.  Then, there exist $\delta>0$
  and a complete curve $\Gamma\subset\h^2$ of constant geodesic
  curvature $2H$ or $-2H$, tangent to $\partial B(x_0,R)$ at $\hat x$,
  such that $u$ extends as a vertical $H$-graph to $B(x_0,R)\cup
  N_\delta(\Gamma,x_0)$ with infinite constant boundary values along
  $\Gamma$.
\end{lemma}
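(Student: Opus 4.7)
The plan is to adapt the Hauswirth--Rosenberg--Spruck compactness strategy from \cite{HRS}, relying on two ingredients available in $\E(\kappa,\tau)$ when $4H^2+\kappa\le 0$: stability of $\Sigma$ (which the excerpt already derives from Fischer-Colbrie applied to the positive Jacobi field $\nu$) and uniform curvature estimates for complete stable $H$-surfaces on intrinsic balls of fixed radius. First I would pick $\{x_n\}\subset B(x_0,R)$ with $x_n\to\hat x$, set $p_n=F_u(x_n)$, and translate $\Sigma$ vertically so that $p_n$ sits at height zero, obtaining a sequence $\Sigma_n$. The curvature estimates together with completeness give a smooth subsequential limit $\Sigma_\infty$, a complete stable $H$-surface through a point $p_\infty$ above $\hat x$. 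Since $u$ does not extend past $\hat x$, the limiting angle function $\nu_\infty$ must vanish at $p_\infty$; because $\nu_\infty\ge 0$ solves $L\nu_\infty=0$, the strong maximum principle forces $\nu_\infty\equiv 0$, so $\Sigma_\infty=\pi^{-1}(\Gamma)$ for a complete curve $\Gamma\subset\h^2$ of constant geodesic curvature $\pm 2H$ (mean curvature of a vertical cylinder equals half the geodesic curvature of its base). Tangency of $\Gamma$ to $\p B(x_0,R)$ at $\hat x$ follows because $B(x_0,R)\subset\pi(\Sigma_n)$ for every $n$.

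Next I would extend $u$ to a one-sided tubular neighborhood of $\Gamma$. The smooth convergence $\Sigma_n\to\pi^{-1}(\Gamma)$ shows that, for each $q\in\Gamma$ and each $M>0$, large $n$ forces $\Sigma$ to contain a local graph of vertical oscillation exceeding $M$ defined over a uniform one-sided neighborhood of $q$ lying on the $x_0$-side of $\Gamma$. Gluing these local extensions along $\Gamma$, using the uniform geometry provided by the curvature estimates, and matching them to $u$ on $B(x_0,R)$, I obtain a vertical $H$-graph defined on $B(x_0,R)\cup N_\delta(\Gamma,x_0)$ for some $\delta>0$. Along $\Gamma$ the extended $u$ must diverge to $\pm\infty$: otherwise boundary regularity would let one extend $u$ across $\Gamma$ as a vertical $H$-graph, contradicting the identification of $\Sigma_\infty$ with the cylinder $\pi^{-1}(\Gamma)$. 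By connectedness of $\Gamma$ and continuity, the sign of the blow-up is constant along $\Gamma$, yielding the \emph{constant} infinite boundary value.

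The main obstacle is passing from the smooth subsequential convergence $\Sigma_n\to\pi^{-1}(\Gamma)$ to a \emph{genuine} extension of the single-valued graph $u$ on a definite side of $\Gamma$. One must rule out that the sheets of $\Sigma_n$ near $\pi^{-1}(\Gamma)$ oscillate across $\Gamma$, and must verify that the sheet containing $F_u(B(x_0,R))$ extends coherently with heights diverging in a controlled way. These rigidities rest on $\Sigma$ being a global multigraph (no folds across $\Gamma$), on the tangency of $\Gamma$ to $\p B(x_0,R)$ at $\hat x$ (which fixes the side $N_\delta(\Gamma,x_0)$), and on the uniform curvature estimates (which preclude wild local behavior). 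This is precisely where the argument of \cite{HRS} is delicate, but its structure carries over intact to $\E(\kappa,\tau)$ under the hypothesis $4H^2+\kappa\le 0$.
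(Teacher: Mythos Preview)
Your proposal is correct and follows essentially the same approach that the paper outlines: the paper does not supply an independent proof of this lemma but simply sketches the Hauswirth--Rosenberg--Spruck compactness argument (vertically translate so $p_n$ is at height zero, use stability and curvature estimates to pass to a limit, identify the limit as the vertical cylinder $\pi^{-1}(\Gamma)$ over a curve of geodesic curvature $\pm 2H$ tangent to $\partial B(x_0,R)$ at $\hat x$) and asserts that it carries over verbatim to $\E(\kappa,\tau)$ when $4H^2+\kappa\le 0$. Your write-up is a faithful, slightly more detailed rendering of that same scheme, including the correct identification of the delicate step (promoting convergence to the cylinder into a genuine one-sided extension of $u$ with divergent boundary values), which you rightly defer to \cite{HRS}.
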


\begin{remark}\label{obse:signo-infinito}
  In our study, we will always find a dichotomy between geodesic
  curvature $2H$ or $-2H$, as well as boundary values $+\infty$ or
  $-\infty$. Although, the arguments below do not depend on these
  signs, it is worth saying something about the relation between them
  in order to describe the complete vertical graphs. As mentioned
  above, it can be shown that the $H$-multigraphs $\Sigma_n$ converge
  uniformly on compact subsets to a vertical cylinder and we are
  considering the unit normal vector field of $\Sigma$ which points
  upwards. By analyzing the normals of $\Gamma$ in $\h^2$ and of
  $\pi^{-1}(\Gamma)$ in $\E(-1,\tau)$, it is not difficult to realize
  that if $u$ tends to $+\infty$ (resp.~$-\infty$) along $\Gamma$, the
  geodesic curvature of $\Gamma$ will be $2H$ (resp.~$-2H$) with
  respect to the normal vector pointing to the domain of definition of
  the graph.  This fact was proved for minimal surfaces in
  $\h^2\times\R$ by Nelli and Rosenberg in~\cite{NR02}; for
  $H$-surfaces in $\h^2\times\R$, for $0<H\leq 1/2$, by Hauswirth,
  Rosenberg and Spruck in~\cite{HRS09}; and for minimal surfaces in
  $\widetilde{\mathrm{SL}_2}(\R)$ by Younes in~\cite{Younes}.
\end{remark}

\section{The proof of Theorem~\ref{teor:grafos-multigrafos}}

As explained above, supposed that $\Sigma$ is a complete vertical
$H$-multigraph in $\E(-1,\tau)$ with $0\leq H<\frac12$, we will show
that $\Sigma$ is a vertical graph. The proof relies on the following
result.

\begin{proposition}\label{prop}
  Let $\Sigma$ be a complete vertical $H$-multigraph in
  $\E(-1,\tau)$. Given $p\in\Sigma$ and $R>0$, there exist an open set
  $\Omega\subseteq\h^2$ and a function $u\in C^\infty(\Omega)$ such
  that $B_\Sigma(p,R)\subseteq F_u(\Omega)$.
\end{proposition}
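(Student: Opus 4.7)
The strategy is to fix a local graph representation around $p$, extend it maximally within $\Sigma$, and then use a connectedness argument along paths in $\Sigma$ to show that $B_\Sigma(p,R)$ fits inside the image of this extended graph. Lemma~\ref{lema:asintotico2} is the crucial input, since it will describe the boundary of the maximal domain.

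First, since $\pi|_\Sigma$ is a local diffeomorphism, I would pick an open neighborhood $\Omega_0\subset\h^2$ of $\pi(p)$ together with $u_0\in C^\infty(\Omega_0)$ satisfying $F_{u_0}(\Omega_0)\subseteq\Sigma$ and $F_{u_0}(\pi(p))=p$. Then I would consider the family $\mathcal{F}$ of pairs $(\Omega,u)$, with $\Omega\subseteq\h^2$ open and connected, $\Omega_0\subseteq\Omega$, $u\in C^\infty(\Omega)$ restricting to $u_0$ on $\Omega_0$, and $F_u(\Omega)\subseteq\Sigma$, partially ordered by inclusion together with compatibility of the graph functions. Any chain in $\mathcal{F}$ has an obvious upper bound given by taking the union of the domains, so Zorn's lemma produces a maximal element $(\Omega,u)\in\mathcal{F}$.

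The crucial step is the boundary analysis. For every $\hat x\in\partial\Omega$ I would choose a ball $B(x_0,r)\subseteq\Omega$ with $\hat x\in\partial B(x_0,r)$ and argue that $u$ cannot be extended as a vertical $H$-graph to any neighborhood of $\hat x$: any such local $H$-graph extension $\tilde u$ would, by real-analyticity of the CMC equation and the fact that $\Sigma$ is a complete immersed $H$-surface, be forced to lie inside $\Sigma$, and therefore would contradict the maximality of $(\Omega,u)$ in $\mathcal{F}$. Lemma~\ref{lema:asintotico2} then yields a complete curve $\Gamma\subset\h^2$ of constant geodesic curvature $\pm 2H$ through $\hat x$ along which $u$ has infinite constant boundary values from within $\Omega$. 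In particular, for any sequence $x_n\in\Omega$ with $x_n\to\hat x$, one has $|u(x_n)|\to\infty$.

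For the final step, fix $q\in B_\Sigma(p,R)$ and a continuous path $\sigma:[0,1]\to\Sigma$ joining $p$ to $q$, and consider
\[
T=\{t\in[0,1]:\sigma(t)\in F_u(\Omega)\}.
\]
Clearly $0\in T$, and $T$ is open since $F_u(\Omega)$ is open in $\Sigma$. To see that $T$ is closed, take $t_n\in T$ with $t_n\to t^*$ and write $x_n=\pi(\sigma(t_n))\in\Omega$ and $x^*=\pi(\sigma(t^*))$. Because $F_u:\Omega\to F_u(\Omega)$ is a diffeomorphism, $\sigma(t_n)=F_u(x_n)$ and $x_n\to x^*$. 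If $x^*\in\Omega$, continuity of $F_u$ gives $F_u(x^*)=\sigma(t^*)$, so $t^*\in T$. If $x^*\in\partial\Omega$, the boundary analysis forces $u(x_n)\to\pm\infty$, contradicting that $u(x_n)$ equals the vertical coordinate of $\sigma(t_n)$ and hence converges to the finite vertical coordinate of $\sigma(t^*)$. Connectedness of $[0,1]$ then gives $T=[0,1]$, so $q\in F_u(\Omega)$. The main obstacle is the boundary analysis step: one must certify that the maximality of $(\Omega,u)$ inside $\mathcal{F}$ forces maximality as a smooth vertical $H$-graph, so that Lemma~\ref{lema:asintotico2} is applicable at every boundary point, and this rests on propagating any hypothetical local $H$-extension back into $\Sigma$ via the real-analyticity of the CMC equation.
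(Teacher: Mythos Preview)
Your approach via a Zorn-maximal graph and a path-connectedness argument is natural, but the boundary analysis has a genuine gap that is not the one you flag at the end. To invoke Lemma~\ref{lema:asintotico2} at a point $\hat x\in\partial\Omega$ you need a geodesic ball $B(y,r)\subset\Omega$ with $\hat x\in\partial B(y,r)$; this is an interior ball condition on $\partial\Omega$, and for an arbitrary Zorn-maximal open set there is no reason it should hold (think of $\Omega$ having an inward cusp at $\hat x$). Without it the lemma simply does not apply, and you have no mechanism to produce the curve $\Gamma$ or the blow-up of $u$. A second, related issue: even when the lemma applies, it gives a curve $\Gamma$ tangent at $\hat x$ and an extension $v$ on $N_\delta(\Gamma,y)$ with $v\to\pm\infty$ along $\Gamma$; to conclude that $|u(x_n)|\to\infty$ for \emph{every} sequence $x_n\to\hat x$ in $\Omega$ you must know that $\Omega\subset N_\infty(\Gamma,y)$ and that $u=v$ on $\Omega\cap N_\delta(\Gamma,y)$ near $\hat x$. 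Both facts require a unique-continuation argument through a region whose connectivity you do not control, since $\Omega$ may be very complicated.

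The paper avoids exactly these difficulties by never passing to an abstract maximal domain. Instead it builds the admissible domains inductively as $B(x_0,R_n)\cap\bigl(\bigcap_{\Gamma\in\mathcal C_n}N_\infty(\Gamma,x_0)\bigr)$, so that $\partial\Omega$ is always piecewise $C^2$ and the interior ball condition is automatic; Lemmas~\ref{lem1}--\ref{lem2} then control the finitely many obstruction points on $\partial B(x_0,R_n)$, and Lemma~\ref{lem3} shows the resulting curves are disjoint, allowing the radius to grow. The final step---that $B_\Sigma(p,R_n)\subset F_{u_n}(\Omega_{\mathcal C_n})$---uses only that $\pi$ is length-decreasing and that $u_n\to\pm\infty$ along the curves in $\mathcal C_n$, so the boundary of the graph in $\Sigma$ projects into $\partial B(x_0,R_n)$. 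Your path-lifting idea could be made to work, but only after you first establish enough regularity of $\partial\Omega$, which essentially forces you back into the paper's inductive scheme.
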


Note that the fact that $\Sigma$ is indeed a graph follows from
Proposition~\ref{prop}: Reasoning by contradiction, if there existed
$p,q\in\Sigma$, $p\neq q$, which project by $\pi:\E(-1,\tau)\to\h^2$
on the same point, then we could take $R$ bigger than the intrinsic
distance from $p$ to $q$ to reach a contradiction. Thus, the rest of
this section will be devoted to prove Proposition~\ref{prop}.

From now on, we fix a point $p\in\Sigma$ and denote $x_0=\pi(p)$. The
following definition will be useful in the sequel.

\begin{definition}
  We say that $\Omega\subset\h^2$ is {\em admissible} if it is a
  connected open set containing $x_0$ for which there exists $u\in
  C^\infty(\Omega)$ such that $F_u(\Omega)\subset\Sigma$ and
  $F_u(x_0)=p$.
\end{definition}

If $\Omega\subset\h^2$ is admissible, then the function $u$ in the
definition above is unique, and we will call it the function
\emph{associated} to $\Omega$. The technique we will use to prove
Proposition~\ref{prop} consists of enlarging gradually an initial
admissible domain until it eventually contains the projection of an
arbitrarily large intrinsic ball.

As $\nu>0$, there exists a neighborhood of $p$ in $\Sigma$ which
projects one-to-one to a ball $B(x_0,\rho)$, for some $\rho>0$.  In
other words, there exists $\rho>0$ such that $B(x_0,\rho)$ is
admissible. Let us consider
\begin{equation}\label{eqn:multigrafo-radio-inicial}
  R_0=\sup\{\rho>0:B(x_0,\rho)\text{ is admissible}\}.
\end{equation}
If $R_0=+\infty$, Proposition~\ref{prop} follows trivially for such a
point $p$, so we assume $R_0<+\infty$. This implies that $B(x_0,R_0)$
is a \emph{maximal admissible ball}, and it will play the role of our
initial domain.

\begin{lemma}\label{lem1}
  Let $\Omega$ an admissible domain such that $\partial\Omega$ is a
  piecewise $C^2$-embedded curve. Suppose that there exists $\hat x$
  in a regular arc of $\partial\Omega$ such that $\Omega$ cannot be
  extended as an admissible domain to any neighborhood of $\hat
  x$. Then:
  \begin{itemize}
  \item[i)] The geodesic curvature of $\partial\Omega$ at $\hat
  x$ with respect to its inner normal vector is at least $-2H$.
\item[ii)] There exists a curve $\Gamma\subset\h^2$ with constant
  geodesic curvature $2H$ or $-2H$, tangent to $\partial\Omega$ at
  $\hat x$, and there exist $\delta>0$, $y\in\Omega$ and a function
  $v\in C^\infty(N_\delta(\Gamma,y))$ such that:
  \begin{itemize}
  \item[a)] The function $v$ has constant boundary values $+\infty$ or
    $-\infty$ along $\Gamma$.
  \item[b)] Given $r>0$, we have $U_r=N_\delta(\Gamma,y)\cap B(\hat
    x,r)\cap\Omega\neq\emptyset$ and there exists $r_0>0$ such that
    $u=v$ in $U_r$ for $0<r<r_0$.
  \end{itemize}
\end{itemize}
\end{lemma}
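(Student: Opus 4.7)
The approach is to reduce the claim to an application of Lemma~\ref{lema:asintotico2} via an interior tangent ball at $\hat x$ and then transfer the resulting extension back to $\Omega$ by unique continuation for the $H$-graph equation. Since $\hat x$ lies on a $C^2$-regular arc of $\partial\Omega$, one can choose $z\in\Omega$ and $R>0$ so that $B(z,R)\subset\Omega$ and $\partial B(z,R)$ is tangent to $\partial\Omega$ at $\hat x\in\partial B(z,R)$. First I would observe that $u|_{B(z,R)}$ cannot be extended to any neighborhood of $\hat x$ as a vertical $H$-graph: otherwise the extension would agree with $u$ on $B(z,R)$ and hence, by unique continuation for the quasi-linear elliptic $H$-graph equation, on an open neighborhood of $B(z,R)$ inside $\Omega$; this would enlarge $\Omega$ past $\hat x$ as an admissible domain, contradicting the hypothesis. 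Applying Lemma~\ref{lema:asintotico2} to $B(z,R)$ yields $\delta>0$, a complete curve $\Gamma\subset\h^2$ of constant geodesic curvature $2H$ or $-2H$ tangent to $\partial B(z,R)$ at $\hat x$, and a function $v\in C^\infty(B(z,R)\cup N_\delta(\Gamma,z))$ extending $u|_{B(z,R)}$ with constant boundary values $\pm\infty$ along $\Gamma$. Transitivity of tangency shows that $\Gamma$ is tangent to $\partial\Omega$ at $\hat x$, and setting $y=z$ already gives part (ii.a).

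For (ii.b), note that $\Gamma$ must lie outside $B(z,R)$ near $\hat x$, since otherwise $v\to\pm\infty$ at an interior point of $B(z,R)$ where $v=u$ is smooth. Thus $\Gamma$ locally lies on the side of $\partial B(z,R)$ opposite to $z$, which forces $B(z,R)$ itself to sit on the $z$-side of $\Gamma$ near $\hat x$. Consequently $B(z,R)\cap N_\delta(\Gamma,z)\cap B(\hat x,r)\neq\emptyset$ for every $r>0$, so $U_r\neq\emptyset$. On the open set $B(z,R)\cap N_\delta(\Gamma,z)$, $u$ and $v$ coincide by construction, so unique continuation implies that they agree on the entire connected component of $\Omega\cap N_\delta(\Gamma,z)$ containing this set; choosing $r_0>0$ small enough, the tangency of $\partial\Omega$ and $\Gamma$ at $\hat x$ forces $U_r$ to lie in that component for all $0<r<r_0$, which yields the required identity.

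Finally, for (i), since $v\to\pm\infty$ along $\Gamma$ while $u$ is smooth on $\Omega$, the identity $u=v$ on $U_r$ forces $\Gamma\cap\Omega\cap B(\hat x,r)=\emptyset$ for small $r$; combined with the tangency at $\hat x$, this places $\Gamma$ locally on the side of $\partial\Omega$ opposite to $\Omega$. Writing both curves as graphs over their common tangent line at $\hat x$, with vertical direction given by the inner normal $\mathbf n$ of $\partial\Omega$, the curve $\Gamma$ lies below $\partial\Omega$ up to second order at $\hat x$, so the geodesic curvatures with respect to $\mathbf n$ satisfy $\kappa_g(\Gamma)\leq\kappa_g(\partial\Omega)$. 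Since $\kappa_g(\Gamma)\in\{2H,-2H\}$, this yields $\kappa_g(\partial\Omega)\geq -2H$. The main technical obstacle I anticipate is the unique-continuation step for the $H$-graph quasi-linear elliptic equation, together with the careful identification of the connected component of $\Omega\cap N_\delta(\Gamma,z)$ near $\hat x$ that contains both $B(z,R)\cap N_\delta(\Gamma,z)$ and, for small $r$, the set $U_r$.
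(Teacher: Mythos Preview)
Your proposal is correct and follows essentially the same route as the paper: pick an interior tangent geodesic ball $B(y,\rho)\subset\Omega$ at $\hat x$, apply Lemma~\ref{lema:asintotico2} to obtain $\Gamma$, $\delta$ and the extension $v$, and then use unique (analytic) continuation to transfer everything back to $\Omega$ and to derive the curvature bound. The paper's argument is terser---it does not spell out why the restriction $u|_{B(y,\rho)}$ inherits the non-extendability hypothesis at $\hat x$, nor the connectedness needed to identify $u$ and $v$ near $\hat x$---but the underlying ideas are identical; your additional care on these points is appropriate and does not constitute a different method.
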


\begin{proof}[Proof of Lemma~\ref{lem1}]
  Let us consider a geodesic ball $B(y,\rho)\subset\h^2$ contained in
  $\Omega$ and tangent to $\partial\Omega$ at $\hat x$.
  Lemma~\ref{lema:asintotico2} guarantees the existence of a curve
  $\Gamma\subset\h^2$ with constant geodesic curvature $2H$ or $-2H$,
  tangent to $B(y,\rho)$ at $\hat x$, and $\delta>0$ such that the $H$-graph over
  $B(y,\rho)$ can be extended to an $H$-graph over $B(y,\rho)\cup
  N_\delta(\Gamma,y)$. Let us define $v$ as the restriction of such an
  extension to $N_\delta(\Gamma,y)$. It can be easily shown that
  $\Gamma$, $\delta$, $y$ and $v$ satisfy the conditions in item
  ii). The uniqueness of prolongation ensures that $\Omega$ must be
  contained in $N_\infty(\Gamma,y)$, from where the estimation for the
  geodesic curvature of $\partial\Omega$ given in item i) follows.
\end{proof}

We will prove (see Lemma~\ref{lem2} for $R=R_0$ and $\mathcal
C=\emptyset$) that the set of points in $\partial B(x_0,R_0)$ such
that $u$ cannot be extended to any neighborhood of them as a vertical
$H$-graph is finite. So we can denote them as
$x_1,\ldots,x_r$. Lemma~\ref{lem1} guarantees that, given
$j\in\{1,\ldots,r\}$, there exist $\delta_j>0$ and a curve
$\Gamma_j\subset\mathbb{H}^2$ of constant geodesic curvature $2H$ or
$-2H$ which is tangent to $\partial B(x_0,R_0)$ at $x_j$, such that
$u$ can be extended to $B(x_0,R_0)\cup N_{\delta_j}(\Gamma_j,x_0)$,
for some $\delta_j>0$. The next step in the proof of
Proposition~\ref{prop} will consist of showing that the curves
$\Gamma_1,\ldots,\Gamma_r$ are disjoint. Then we will be able to
extend the initial ball $B(x_0,R_0)$ to a new admissible domain
$\Omega=B(x_0,R_0)\cup(\cup_{j=1}^rN_\delta(\Gamma_j,x_0))$, for some
$\delta\leq \min\{\delta_j :j=1,\dots, r\}$. The function associated
to such $\Omega$ will have boundary values $+\infty$ or $-\infty$
along each of the $\Gamma_j$ (depending on the sign of its geodesic
curvature). Therefore, for a further extension of $\Omega$ we will
work in $\cap_{j=1}^rN_\infty(\Gamma_j,x_0)$, i.e., we will extend
$\Omega$ in the direction of $\partial\Omega\cap\partial B(x_0,R_0)$.
As this process will be iterated, we describe a general situation for
the extension procedure.

\begin{definition}\label{defi:posicion-general}
  Let $\mathcal C$ be a finite family of curves in $\h^2$, each one
  with constant geodesic curvature $2H$ or $-2H$.
We will say that $\mathcal C$ is {\em in general position} for some
radius $R>0$ when the following three conditions are satisfied:
  \begin{enumerate}
  \item[a)] Each $\Gamma\in\mathcal C$ intersects the closed ball
    $\overline B(x_0,R)$ and $x_0\not\in\Gamma$.
  \item[b)] $\partial\Omega_{\mathcal C}\cap\Gamma\neq\emptyset$ for
    every $\Gamma\in\mathcal C$, where
    \begin{equation}\label{OmegaC}
      \Omega_{\mathcal{C}}=B(x_0,R)\cap(\cap_{\Gamma\in\mathcal C}
      N_\infty(\Gamma,x_0)).
    \end{equation}
  \item[c)] No intersection point of curves in $\mathcal C$ lies on
    $\overline B(x_0,R)$.
\end{enumerate}
\end{definition}

(We recall that $N_\infty(\Gamma,x_0)$ is the open connected component
of $\h^2\sm\Gamma$ containing $x_0$.) It is clear that the family of
curves $\{\Gamma_1,\ldots,\Gamma_r\}$ in the discussion above is in
general position for the radius $R_0$. This condition will be
preserved under the successive steps for enlarging the admissible
domain. The next lemma gives some information about a family of curves
in general position.

\begin{figure}
  \begin{center}
    \includegraphics[scale=.6]{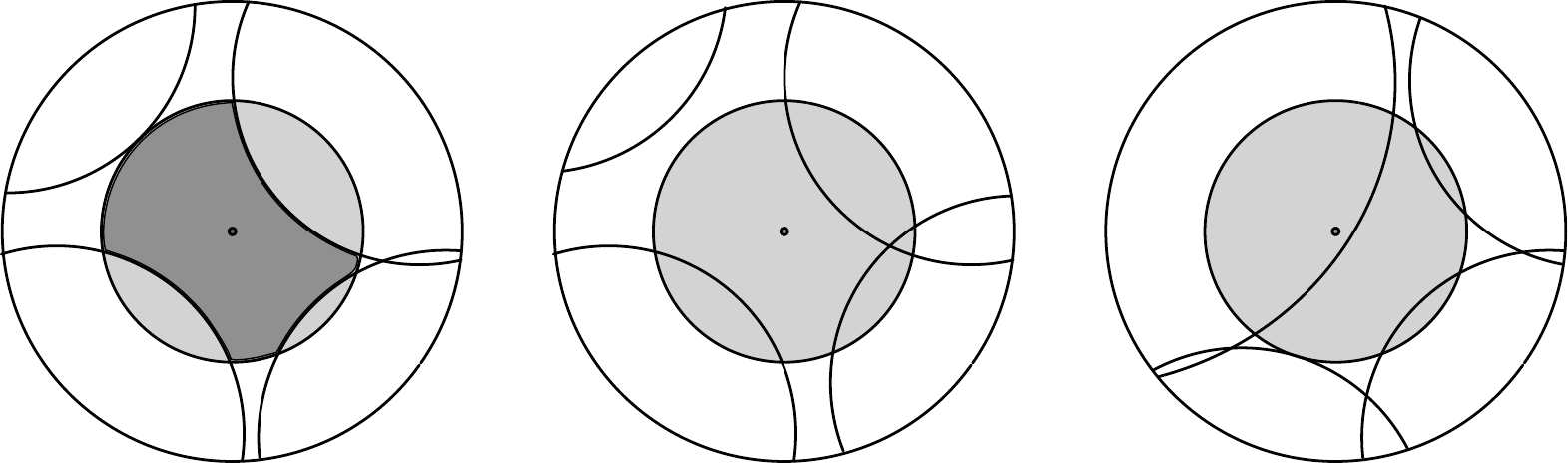}
  \end{center}
  \caption{Three families of four curves in $\h^2$. The first one is
    in general position with respect to the shaded ball and the domain
    $\Omega_{\mathcal C}$ associated to this family has been also
    shaded in a darker grey.  On the contrary, the other two families
    are not in general position: conditions (a) and (c) in the
    definition fail for the second one, whereas condition (b) does not
    hold for the third one.}\label{fig:posicion-general}
\end{figure}

\begin{lemma}\label{lem3}
  Let $\mathcal C$ be a finite family of curves in general position
  for a radius $R>0$. Suppose that $\Omega_{\mathcal C}$, defined
  by~\ref{OmegaC}, is an admissible domain and that, for any
  $\Gamma\in\mathcal C$ there exists a vertical $H$-graph $v$ over
  $N_\delta(\Gamma,x_0)$, for some $\delta>0$, with constant infinite
  boundary values along $\Gamma$, which coincides with the function
  $u$ associated to $\Omega_{\mathcal C}$ on $\Omega_{\mathcal C}\cap
  N_\delta(\Gamma,x_0)$. Then:
  \begin{enumerate}
  \item[a)] Any two curves in $\mathcal C$ are disjoint.
  \item[b)] There exists $\delta'>0$ so that $\Omega_{\mathcal
      C}\cup(\cup_{\Gamma\in\mathcal C}N_{\delta'}(\Gamma,x_0))$ is
    admissible.
  \end{enumerate}
\end{lemma}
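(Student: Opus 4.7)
My plan is to establish (a) by contradiction and deduce (b) as a routine gluing once disjointness is known. For (a), assume two distinct curves $\Gamma_1,\Gamma_2\in\mathcal{C}$ share a point $q$; condition (c) of Definition~\ref{defi:posicion-general} places $q$ outside $\overline B(x_0,R)$. Since $\Gamma_1\neq\Gamma_2$ are complete curves of constant geodesic curvature in $\{-2H,2H\}$, rigidly determined by a tangent direction and a sign of curvature, they meet transversally at $q$ (the degenerate tangential case with opposite curvature signs admits a similar argument). A small geodesic disk $B(q,\varepsilon)$ then splits into four open sectors; let $A$ be the one on the $x_0$-side of both curves. For $\varepsilon$ small, $A\subset N_\delta(\Gamma_1,x_0)\cap N_\delta(\Gamma_2,x_0)$, so both $v_1$ and $v_2$ are defined on $A$, with $v_j$ blowing up only on the arc $\Gamma_j\cap\partial A$ and remaining smooth across the other curve, which lies in the interior of $N_\delta(\Gamma_j,x_0)$.

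The contradiction follows from analyzing the difference $w:=v_1-v_2$ on $A$, whose asymptotics are governed by the signs in Remark~\ref{obse:signo-infinito}. If $v_1$ and $v_2$ have opposite signs of infinity, then $w$ has the same sign of infinity on both arcs of $\partial A$ and attains an interior extremum; since $w$ satisfies a linear elliptic equation $Lw=0$ (the linearization of the $H$-graph equation) with smooth interior coefficients, the Hopf maximum principle forces $w\equiv 0$, contradicting the distinct blow-up behaviors. If instead $v_1$ and $v_2$ share the sign of infinity, then the envelope $M:=\max(v_1,v_2)$ (resp.\ $\min(v_1,v_2)$) is Lipschitz on $A$, smooth off the set $\{v_1=v_2\}$, and has infinite boundary values of common sign along both arcs of $\partial A$ meeting at $q$; integrating the flux identity $\div(X_M)=2H$ (with $X_M$ the horizontal projection of the downward unit normal, satisfying $|X_M\cdot\nu|\leq 1$ and $|X_M\cdot\nu|=1$ on an arc of infinite value) over a shrinking geodesic triangle $T_\varepsilon$ at $q$, the perimeter along the two infinite arcs contributes $\pm(2\varepsilon+O(\varepsilon^2))$, the chord contributes at most $|p_1p_2|=2\varepsilon\sin(\theta/2)<2\varepsilon$ in absolute value, and the area term is $O(\varepsilon^2)$; these bounds are incompatible for $\varepsilon$ small.

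For (b), given disjointness, the arcs $\Gamma_j\cap\overline B(x_0,R)$ are compact and the finite family is pairwise separated by a positive distance. Choose $\delta'>0$ smaller than half that distance and smaller than every $\delta_j$ provided by the hypothesis; if necessary, replace each $N_{\delta'}(\Gamma_j,x_0)$ by its connected component touching $\Omega_{\mathcal{C}}$. The resulting sets are pairwise disjoint, so the assignment $u'|_{\Omega_{\mathcal{C}}}:=u$ and $u'|_{N_{\delta'}(\Gamma_j,x_0)}:=v_j$ is unambiguous, smooth, and satisfies $F_{u'}(\Omega')\subset\Sigma$ by construction; hence $\Omega':=\Omega_{\mathcal{C}}\cup\bigcup_j N_{\delta'}(\Gamma_j,x_0)$ is connected, contains $x_0$, and is admissible.

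The principal technical obstacle I anticipate is the flux argument on the Lipschitz envelope $M$ in the same-sign case of (a): justifying $\div(X_M)=2H$ across the interface $\{v_1=v_2\}$, where $M$ fails to be $C^1$, calls either for a distributional/approximation argument or for replacement of $M$ by a smooth barrier modelled on the $H$-cylinder $\pi^{-1}(\Gamma_j)$. A secondary check is that the normalization $|X_M\cdot\nu|\leq 1$ and the divergence identity in the base $\h^2$ are indeed available for $H$-graphs in the twisted submersion $\pi:\E(-1,\tau)\to\h^2$, rather than being borrowed verbatim from the product case $\h^2\times\R$.
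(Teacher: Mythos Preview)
Your strategy for (a) is genuinely different from the paper's, and it has a real gap. In the opposite-sign case you claim that $w=v_1-v_2$ ``attains an interior extremum'' on the sector $A$; but $\partial A$ has a third piece, the circular arc of $\partial B(q,\varepsilon)$, on which $w$ is finite. Since $w\to+\infty$ along both $\Gamma$-arcs, the infimum of $w$ over $\overline A$ will typically be achieved on that circular arc, not in the interior, and the strong maximum principle yields nothing. Shrinking $\varepsilon$ does not help. In the same-sign case you already flag the main difficulty: the envelope $M=\max(v_1,v_2)$ is not an $H$-graph, so the identity $\div(X_M)=2H$ is not available across the contact set without an approximation or distributional argument that you do not supply. (Running the flux argument on $v_1$ alone fails because $v_1$ is smooth across $\Gamma_2$, so you get no definite sign on that side.) You also wave off the tangential-contact case ($\Gamma_1,\Gamma_2$ tangent with opposite curvature) without indicating how the sector picture and the flux estimate survive when the opening angle is zero.

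The paper avoids all of this by an extension argument rather than a local PDE comparison at $q$. One sweeps the triangular region $T$ between $\Gamma_1$ and $\Gamma_2$ (from $\Omega_{\mathcal C}$ towards $\widetilde x$) by a one-parameter family of geodesic disks $D_t$ tangent to both curves. The point is that the relevant boundary arc $\Lambda_t\subset\partial D_t$ has geodesic curvature less than $-1<-2H$ with respect to the inner normal, so by Lemma~\ref{lem1}(i) the graph $u$ can never get stuck along $\Lambda_t$; hence $u$ extends to an $H$-graph $u_1$ on all of $\Omega_{\mathcal C}\cup T$, with unbounded values on the side of $\partial T$ lying in $\Gamma_1\cup\Gamma_2$. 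But $u$ also extends to $u_2=v_1$ on $\Omega_{\mathcal C}\cup N_\varepsilon(\Gamma_1,x_0)$, and unique continuation forces $u_1=u_2$ on the overlap. This is impossible, since $u_2$ is bounded on $\Gamma_2\cap N_\varepsilon(\Gamma_1,x_0)$ near $\widetilde x$ while $u_1$ is not. Thus the contradiction comes from unique continuation of a single $H$-graph, not from comparing two graphs, and no case split on the signs of the infinite values is needed.

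Your argument for (b) is fine and matches the paper's in spirit; once (a) is established the gluing is routine.
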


\begin{proof}[Proof of Lemma~\ref{lem3}]
  Reasoning by contradiction, let us suppose that there exists a point
  $\widetilde x\in\Gamma_1\cap\Gamma_2$ for some
  $\Gamma_1,\Gamma_2\in\mathcal C$. Condition c) in
  Definition~\ref{defi:posicion-general} tells us that $\widetilde
  x\not\in\overline B(x_0,R)$. We can take a continuous family
  $\{D_t\}_{t\in[0,\ell)}$ of geodesic balls in $\h^2$ satisfying the
  following three conditions:
  \begin{enumerate}
  \item $D_0\subset B(x_0,R)$.
  \item $\partial D_t$ is tangent to $\Gamma_1$ and $\Gamma_2$, for
    any $t\in[0,\ell)$.
  \item The radius of $D_t$ strictly decreases with respect to $t$,
    and $D_t$ converges to the point $\widetilde x$ when $t\to\ell$.
  \end{enumerate}
  Let us call, respectively, $x_1$ and $x_2$ the points in $\Gamma_1$
  and $\Gamma_2$ which are closest to $x_0$. Denote by $T$ an open
  triangle with vertices $\widetilde x$, $x_1$ and
  $x_2$, which has two sides lying on $\Gamma_1$ and $\Gamma_2$ and the third one is a curve interior to $\Omega_{\mathcal C}$ joining $x_1$ and $x_2$.
  Let us denote by $\Lambda_t\subset \partial D_t$ the intersection of
  $\Omega_{\mathcal C}\cup T$ with the longest of the two curves in
  which $\partial D_t$ is divided by $\Gamma_1$ and $\Gamma_2$. Then
  $\Omega_{\mathcal C}\cup T=\Omega_{\mathcal C}\cup (\cup_{[0,\ell)}
  \Lambda_t)$ (see figure~\ref{fig:barrer-triangulo}).

  \begin{figure}
    \centering\includegraphics[scale=.6]{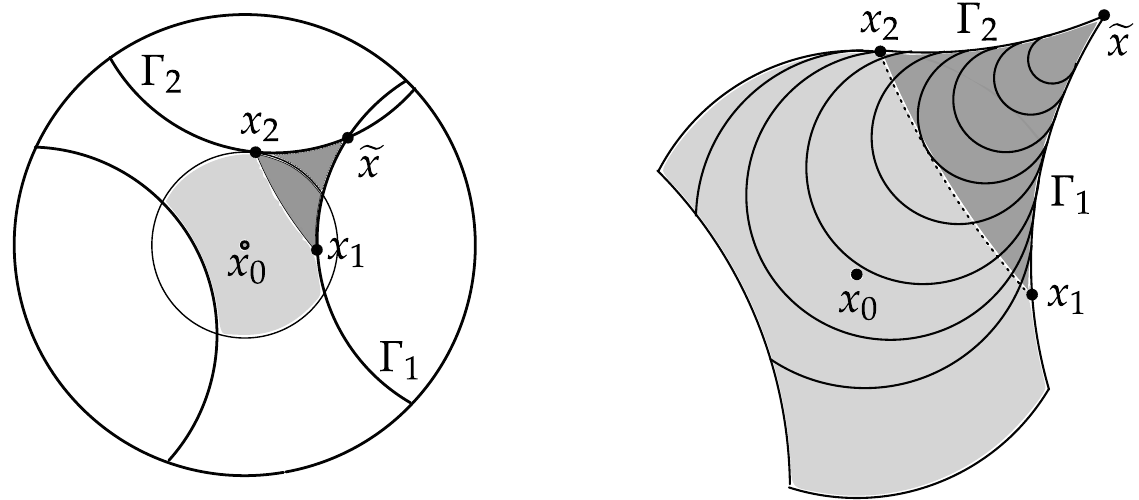}
    \caption{Left: Three curves in general position, two
      of which intersect at $\widetilde x\not\in\overline
      B(x_0,R)$. Right:
      $\Omega_{\mathcal C}\cup T$ have been magnified and some of the
      curves $\Lambda_t$ which cover $T$ up to $\widetilde x$ are
      shown.}
    \label{fig:barrer-triangulo}
  \end{figure}

  Let $r\in[0,\ell]$ be the supreme of the values of $t\in[0,\ell)$
  for which the function $u$ associated to $\Omega_{\mathcal C}$ can
  be extended as an $H$-graph on $\Omega_{\mathcal
    C}\cup(\cup_{s\in[0,t]}\Lambda_s)$. Note that $r>0$ since the
  graph can be extended to $\Omega_{\mathcal C}\cup
  N_\varepsilon(\Gamma_i,x_0)$, for $i\in\{1,2\}$ and some
  $\varepsilon>0$, and the curves $\Lambda_s$ lie on the union of
  these two open sets for small values of $s$. Moreover, it must be
  $r=\ell$: Otherwise there would exist a point in $\Lambda_r$ such
  that $u$ cannot be extended to any neighborhood of it as an
  $H$-graph. But $\Lambda_r$ has constant geodesic curvature smaller
  than $-1<-2H$ with respect to the normal pointing to
  $\Omega_{\mathcal C}\cup(\cup_{s\in[0,r]}\Lambda_s)$, contradicting
  Lemma~\ref{lem1}.

  Hence we have proved that $\Omega_1=\Omega_{\mathcal C}\cup T$ is
  admissible, and the extension $u_1$ of $u$ over $\Omega_1$ has
  non-bounded values in the part of $\partial\Omega_1$ lying on
  $\Gamma_1\cup\Gamma_2$. Nonetheless, the graph $u$ can also be
  extended to $u_2$ defined over $\Omega_2=\Omega_{\mathcal C}\cup
  N_\varepsilon(\Gamma_1,x_0)$. Thus $u_1=u_2$ in
  $\Omega_1\cap\Omega_2$, by uniqueness of the analytic
  prolongation. This is a contradiction because $u_2$ has bounded
  values in $\Gamma_2\cap
  N_\varepsilon(\Gamma_1,x_0)\subset\partial(\Omega_1\cap\Omega_2)$
  whereas $u_1$ does not. Such a contradiction proves a).  Item b)
  also follows from the previous argument.
\end{proof}

Let us now prove that, apart from the curves along which the graph
takes infinite boundary values, the number of points which do not
admit an extension to any neighborhood of them is finite at any step
of the extension procedure.

\begin{lemma}\label{lem2}
  Let $\mathcal C$ be a finite family of curves in general position
  for some radius $R>0$ and suppose that $\Omega_{\mathcal C}$ is
  admissible. Then the set $A(R)$ of points in
  $\partial\Omega_{\mathcal C}\cap\partial B(x_0,R)$ such that the
  function $u$ associated to $\Omega_{\mathcal C}$ cannot be extended
  to any neighborhood of them as an $H$-graph, is finite.
\end{lemma}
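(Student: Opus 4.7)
The strategy is to show that $A(R)$ is closed in the compact set $\partial\Omega_{\mathcal C}\cap\partial B(x_0,R)$ and then derive a contradiction from any accumulation point by colliding two analytic continuations of $u$ with non-bounded boundary values along two nearby curves. Closedness is easy: if $\hat x$ is a limit of points in $A(R)$ and $u$ admitted an $H$-graph extension to a neighborhood of $\hat x$, the same extension would cover nearby $\hat x_n\in A(R)$, contradicting the definition of $A(R)$. Moreover, $\partial\Omega_{\mathcal C}\cap\partial B(x_0,R)$ is a finite union of closed arcs: its corners are exactly the intersections of the curves $\Gamma\in\mathcal C$ with $\partial B(x_0,R)$, and since $|2H|<1<\coth R$, a Gauss--Bonnet (or barrier) argument bounds $|\Gamma\cap\partial B(x_0,R)|\leq 2$ for every $\Gamma\in\mathcal C$.

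Now I assume for contradiction that $A(R)$ is infinite and extract a convergent subsequence $\hat x_n\to\hat x$ in $A(R)$, with all $\hat x_n$ (and $\hat x$) lying in a single regular arc of $\partial\Omega_{\mathcal C}$ after discarding the finitely many corners. Lemma~\ref{lem1} applied at $\hat x$ and at each $\hat x_n$ yields curves $\Gamma,\Gamma_n$ of constant geodesic curvature $\pm 2H$ tangent to $\partial B(x_0,R)$ at the corresponding points, together with extensions $v$ of $u$ on $N_\delta(\Gamma,x_0)$ and $v_n$ of $u$ on $N_{\delta_n}(\Gamma_n,y_n)$ with constant infinite boundary values along $\Gamma$ and $\Gamma_n$. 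Passing to a further subsequence so that the signs of the geodesic curvatures and of the infinite boundary values agree, standard compactness for curves of constant geodesic curvature in $\h^2$ yields $\Gamma_n\to\Gamma$ in $C^2_{\mathrm{loc}}$; the limit must coincide with $\Gamma$ because it is tangent to $\partial B(x_0,R)$ at $\hat x$ with the prescribed signed curvature, and such a curve is unique.

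The concluding step uses unique continuation for the $H$-graph equation. For $n$ large, $\Gamma_n\neq\Gamma$: otherwise $\Gamma$ would be tangent to $\partial B(x_0,R)$ at two distinct points $\hat x,\hat x_n$, and the region bounded between the two tangent arcs would violate Gauss--Bonnet since $|2H|<1<\coth R$. Two distinct curves of the same constant geodesic curvature in $\h^2$ intersect in a discrete set, so one may choose $p\in\Gamma_n\cap N_\delta(\Gamma,x_0)\setminus\Gamma$. The overlap $D=N_\delta(\Gamma,x_0)\cap N_{\delta_n}(\Gamma_n,y_n)$ is open, contains a piece of $\Omega_{\mathcal C}$ on which $v=v_n=u$, and is connected near that piece, so by the unique analytic continuation of solutions of the $H$-graph PDE one gets $v=v_n$ on $D$. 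But $v(p)$ is finite whereas $v_n(q)\to\pm\infty$ as $q\to p$ from the $x_0$-side of $\Gamma_n$ inside $D$, the desired contradiction.

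The main obstacle is to set up this final collision cleanly: one must verify that $\Gamma_n$ is truly distinct from $\Gamma$, that for $n$ large $\Gamma_n$ enters $N_\delta(\Gamma,x_0)$ from the component $N_\infty(\Gamma,x_0)$ (so the two ``blow-up'' sides of $v$ and $v_n$ are compatible on $D$), and that $D$ contains a connected open piece of $\Omega_{\mathcal C}$ from which the analytic continuation can be propagated. All three points reduce to the $C^2_{\mathrm{loc}}$ convergence $\Gamma_n\to\Gamma$ and to the rigidity of constant geodesic curvature $\pm 2H$ curves tangent to a transversal circle at a prescribed point.
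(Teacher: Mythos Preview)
Your argument is correct in outline but substantially more involved than the paper's. The paper observes that a \emph{single} application of Lemma~\ref{lem1} at a point $\hat x\in A(R)$ interior to one of the arcs already forces isolation: the tangent curve $\Gamma$ has geodesic curvature $\pm 2H$ with $|2H|<1<\coth R$, so near $\hat x$ it bends away from $B(x_0,R)$, and a punctured neighborhood of $\hat x$ in $\partial B(x_0,R)$ therefore lies inside the open set $N_\delta(\Gamma,x_0)$ on which the extension $v$ is defined. Since $v=u$ on the adjacent part of $\Omega_{\mathcal C}$ (Lemma~\ref{lem1}(ii)(b) plus unique continuation), $v$ itself furnishes an $H$-graph extension of $u$ to a full neighborhood of every nearby boundary point; hence none of them can belong to $A(R)$. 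Together with closedness of $A(R)$ (which you prove exactly as the paper does) and compactness of $\partial B(x_0,R)$, this yields finiteness directly.

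Your detour---applying Lemma~\ref{lem1} again at each $\hat x_n$, extracting $\Gamma_n\to\Gamma$, and colliding $v$ with $v_n$ along $\Gamma_n$---is therefore unnecessary; the ``main obstacle'' you flag (distinctness of $\Gamma_n$ from $\Gamma$, connectivity of the overlap $D$, propagation of the analytic continuation) is self-created and dissolves once you notice that $v$ alone already extends $u$ past $\hat x_n$. There is also a small slip in that detour: after passing to a subsequence so that the $\Gamma_n$ share a common signed curvature, you have not checked that this sign agrees with that of $\Gamma$, so the limit curve need not be $\Gamma$ a priori. This is easily repaired (and in any case irrelevant, since all you actually use is $\hat x_n\in\Gamma_n\cap N_\delta(\Gamma,x_0)$, which is immediate).
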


\begin{proof}[Proof of Lemma~\ref{lem2}]
  Since $\mathcal C$ is finite, $\partial\Omega_{\mathcal
    C}\cap\partial B(x_0,R)$ consists of finitely many regular
  arcs. Their endpoints belong to $A(R)$, but they are finitely many
  and, by Lemma~\ref{lem1}, it is known that the graph can be extended
  as an $H$-graph to a neighborhood in $\h^2$ of any point in a
  neighborhood in $\partial\Omega_{\mathcal C}\cap\partial B(x_0,R)$
  of any of them. Thus we will not consider such endpoints.

  Lemma~\ref{lem1} also says that the rest of points in $A(R)$ are
  isolated. On the other hand, it is easy to check that $A(R)$ is
  closed: We suppose there exists a sequence $\{x_n\}$ in $A(R)$
  converging to $x_\infty\not\in A(R)$. Such a point
  $x_\infty$ must be interior to one of the arcs in $\partial\Omega_{\mathcal
    C}\cap\partial B(x_0,R)$ because of the discussion above. Then $u$
  admits an extension to a neighborhood of $x_\infty$ in $\h^2$. But
  $x_n$ lies in such a neighborhood for $n$ large enough, a
  contradiction.

  Hence $A(R)$ is closed, consists of isolated points and is contained
  in the compact set $\partial B(x_0,R)$, so it is finite.
\end{proof}

We now have all the ingredients to prove the desired result.

\begin{proof}[Proof of Proposition~\ref{prop}]
  Repeating the argument leading to
  equation~\eqref{eqn:multigrafo-radio-inicial} and provided that
  $R_0<+\infty$ (otherwise we would be done), we can consider the
  maximal admissible ball $B(x_0,R_0)$. Lemma~\ref{lem2} guarantees
  the existence of a finite collection of points in $\partial
  B(x_0,R_0)$ such that $B(x_0,R_0)$ cannot be extended in an
  admissible way to any neighborhood of any of them. Hence, there
  exists a finite family $\mathcal C_0$ of complete curves with
  constant geodesic curvature $2H$ or $-2H$, each one tangent to
  $\partial B(x_0,R_0)$ at one of those points, under the conditions
  of Lemma~\ref{lem1}. The family $\mathcal C_0$ is in general
  position. Then Lemma~\ref{lem3} says that the curves in $\mathcal
  C_0$ are disjoint and that there exists $\delta>0$ such that the
  domain $\Omega'=\Omega_{\mathcal C_0}\cup(\cup_{\Gamma\in\mathcal
    C_0}N_\delta(\Gamma,x_0))$ is admissible.

  Moreover, the initial graph can be extended to a neighborhood of any
  point interior to $\partial\Omega'\cap\partial B(x_0,R_0)$, so we
  can define the supremum of $R>R_0$ satisfying that
  $B(x_0,R)\cap(\cap_{\Gamma\in\mathcal C_0} N_\infty(\Gamma,x_0))$ is
  admissible, called $R_1$. If $R_1<+\infty$, then let us consider the
  admissible domain $\Omega_1=B(x_0,R_1)\cap(\cap_{\Gamma\in\mathcal
    C_0} N_\infty(\Gamma,x_0))$. By definition of $R_1$, there will be
  points in the interior of $\p\Omega_1\cap\p B(x_0,R_1)$ for which
  $\Omega_1$ cannot be extended in an admissible way to any
  neighborhood of any of them. By Lemma~\ref{lem2}, these points are
  finitely many, and we can apply Lemma~\ref{lem1} to obtain a new
  family $\mathcal C_1\supset\mathcal C_0$ in general position for the
  radius $R_1$. Note that $\mathcal C_1-\mathcal C_0\neq \emptyset$.

  This procedure can be iterated to get a strictly increasing sequence
  (possibly finite) or radii $\{R_n\}$ in such a way that, for any
  $n$, there exists $\mathcal C_n$, a family of curves in general
  position for $R_n$, with $\mathcal C_{n-1}\subsetneq\mathcal
  C_n$. Besides, the arguments above show that the domain
  \[
  \Omega_{\mathcal C_n}=B(x_0,R_n)\cap\left(\bigcap_{\Gamma\in\mathcal
      C_n}N_\infty(\Gamma,x_0)\right),
  \]
  is admissible, $\Omega_{\mathcal C_{n-1}}\subset\Omega_{\mathcal
    C_n}$ and the associate function $u_n$ extends $u_{n-1}$. Note
  that there exists $\delta_n>0$ such that $\Omega_{\mathcal C_n}$ can
  be extended to the admissible domain $\Omega_{\mathcal
    C_n}\cup(\cup_{\Gamma\in\mathcal C_n}N_{\delta_n}(\Gamma,x_0))$,
  by item b) of Lemma~\ref{lem3}.

  In this situation, there are two possibilities: Either
  $R_{n_0}=+\infty$ for some $n_0\geq 0$ (and we would be done since
  $\Sigma$ would be a complete vertical graph), or the sequence
  $\{R_n\}$ has infinitely many terms. Suppose we are in the latter
  case. Then we claim that $\lim\{R_n\}=+\infty$. In order to prove
  this, we observe that the curves in $\mathcal C_n$ are all disjoint
  by Lemma~\ref{lem3}. If $R_\infty=\lim\{R_n\}<+\infty$, we would
  have an infinite family of disjoint curves (infinite, as each
  $\mathcal C_n$ strictly contains $\mathcal C_{n-1}$), each one with
  constant geodesic curvature $2H$ or $-2H$ and intersecting the
  compact ball $\overline B(x_0,R_\infty)$. But this situation is
  impossible by condition b) in Definition~\ref{defi:posicion-general}
  (it tells us that, when we fix one of such curves, $\Gamma$, the
  rest of them must lie in one of the two components of
  $\h^2\sm\Gamma$).

  Given $n\in\N$, the open set $O_n=F_{u_n}(\Omega_{\mathcal
    C_n})\subset\Sigma$ satisfies that $\pi(\partial
  O_n)\subset\partial B(x_0,R_n)$, since $u_n$ has boundary values
  $\pm\infty$ on the curves of $\mathcal C_n$. Then we get that the
  length of any piecewise regular curve $\alpha:[a,b]\to\Sigma$ with
  $\alpha(a)=p$ and $\alpha(b)\in\partial O_n$ is bigger than $R_n$,
  as the projected curve $\pi\circ\alpha$ in $\h^2$ is shorter than
  $\alpha$. Thus, the distance in $\Sigma$ from $p$ to $\partial O_n$
  is at least $R_n$, so $B_\Sigma(p,R_n)\subset F_u(\Omega_{\mathcal
    C_n})$ and we are done since $R_n$ is arbitrarily large.
\end{proof}

\end{document}